	\newtheorem{lemma}{Lemma}
\journal{TBD}
\begin{document}
\begin{frontmatter}


\title{Integer patterns in Collatz sequences}
\author{Zenon B. Batang}
\address{Coastal and Marine Resources Core Laboratory, King Abdullah University of Science and Technology, Thuwal 23955, Saudi Arabia}
\ead{zenon.batang@kaust.edu.sa}

\begin{abstract}
The Collatz conjecture asserts that repeatedly iterating $f(x) = (3x + 1)/2^{a(x)}$, where $a(x)$ is the highest exponent for which $2^{a(x)}$ exactly divides $3x+1$, always lead to $1$ for any odd positive integer $x$. Here, we present an arborescence graph constructed from iterations of $g(x) = (2^{e(x)}x - 1)/3$, which is the inverse of $f(x)$ and where $x \not \equiv [0]_3$ and $e(x)$ is any positive integer satisfying $2^{e(x)}x - 1 \equiv [0]_3$, with $[0]_3$ denoting $0\pmod{3}$. The integer patterns inferred from the resulting arborescence provide new insights into proving the validity of the conjecture.
\end{abstract}

\begin{keyword}
Collatz conjecture \sep Arborescence \sep Covering system  \sep Cycle \sep Proof
\MSC[2010] 11B99
\end{keyword}

\end{frontmatter}

\section{Introduction} \label{sec1}
Denote by $\mathbb{N}=\{1,2,3,\ldots\}$ the set of natural numbers and let $\mathbb{N}_0 \coloneqq \mathbb{N} \cup \{0\}$, $\mathbb{E} \coloneqq 2\mathbb{N}_0$ and $\mathbb{U}\coloneqq 2\mathbb{N}_0+1$. Write $[r]_q$ for $r\pmod{q}$, where $r$ is \textit{least nonnegative residue} modulo $q$. Every $x \in \mathbb{U}$ can be stated as $x = 3\mu_r+ r$, where $\mu_r \in \mathbb{N}_0$ is the \textit{multiple} of $x \equiv [r]_3$. Note that $\mu_0, \mu_2 \in \mathbb{U}$ and $\mu_1 \in \mathbb{E}$. Define the mapping $f \colon \mathbb{U} \to \mathbb{U}$ by
\begin{equation} \label{eq1}
	f(x) = (3x + 1)/2^{a(x)},
\end{equation}
where $a(x) \in \mathbb{N}$ is the highest exponent for which $2^{a(x)}$ exactly divides $3x+1$. The famous \textit{Collatz conjecture} asserts that for every $x \in \mathbb{U}$, there exists $k \in \mathbb{N}_0$ such that $f^{k}(x)=1$, where $f^k$ denotes $k$ compositions of $f$. For an initial input $x_0$, let $f^{k}(x_0) = x_k$ and any $k$ iterations of $f$ on $x_0$ generate a sequence of odd integers, termed \textit{Collatz sequence} or \textit{trajectory}, denoted by
\begin{equation} \label{eq2}
	S^{k}(x_0) = \{x_0, x_1, \ldots, x_k\}_{k \in \mathbb{N}_0}.
\end{equation}
We have $f^{0}(x_0)=x_0$ as the identity map and $f^{\infty}(1)=1$ forms the \textit{trivial cycle} (\textit{loop}). Collatz conjecture claims that \eqref{eq1} always yields the trivial cycle for any $x \in \mathbb{U}$, i.e. repeatedly iterating \eqref{eq1} always yields a \textit{convergent} Collatz sequence.

The Collatz conjecture is an intriguing problem in mathematics that has remained unsolved for over $80$ years despite its apparent simplicity. While probabilistic heuristics, stochastic models, and computational verifications suggest that the conjecture is likely true, it has so far resisted any attempts at a complete proof by different mathematical approaches. One can refer to \cite{Laga1}, \cite{Laga2} and \cite{Cham} for an overview of the conjecture, with an annotated bibliography in two parts provided by Lagarias (\cite{Laga3}, \cite{Laga4}). Recognizing the notorious difficulty of the Collatz conjecture, the prolific mathematician Paul Erd{\H o}s famously stated that \textit{``Mathematics is not yet ready for such problems''} (\cite{Laga1}). In this paper, we present new insights into integer patterns that underlie the truth of the conjecture.

An \textit{arborescence} is a directed rooted tree in which there is only one directed path from the root to any other vertex, thus all edges point away from the root. Here, we construct an infinite arborescence $G = (V, E)$, with \textit{vertex} (\textit{node}) set $V(G)$ and \textit{edge} set $E(G)$, based on the inverse mapping $g \colon \mathbb{U}_p \to \mathbb{U}$ defined by
\begin{equation} \label{eq3}
	g(x) = (2^{e(x)}x - 1)/3,\quad \forall x \in \mathbb{U}_p,
\end{equation}
where $\mathbb{U}_p = \mathbb{U} \not \equiv [0]_3$ and $e(x)$ is any positive integer such that $2^{e(x)}x - 1 \equiv [0]_3$. One easily obtains $e(x) = 2n$ if $x \equiv [1]_3$ and $e(x) = 2n-1$ if $x \equiv [2]_3$ for $n \in \mathbb{N}$, while $g(x) = \emptyset$ if $x \equiv [0]_3$. Obviously, $g$ is a \textit{one-to-many} association mapping.

Ignoring the trivial cycle, each iterate of \eqref{eq3} corresponds to a vertex in $V(G)$ and a single step of iteration for every $n \in \mathbb{N}$ represents a directed edge in $E(G)$. Collatz conjecture implies that repeatedly iterating \eqref{eq3} forms an arborescence $G$ with a root labeled $1$, excluding the trivial cycle since $G$ is inherently \textit{acyclic} (Figure \ref{Fig1}). Any finite sequence of vertices along a directed \textit{path} of at least one edge from the root in $G$ corresponds to a Collatz trajectory in reverse. Hence, we call $G$ the \textit{inverse Collatz graph}. Our aim is to show that each vertex in $G$, excluding the trivial cycle, is unique and that $V(G) = \mathbb{U}$ to validate the Collatz conjecture. 

\begin{figure} [htb]
\center{\includegraphics[scale = 0.6, width=\textwidth]{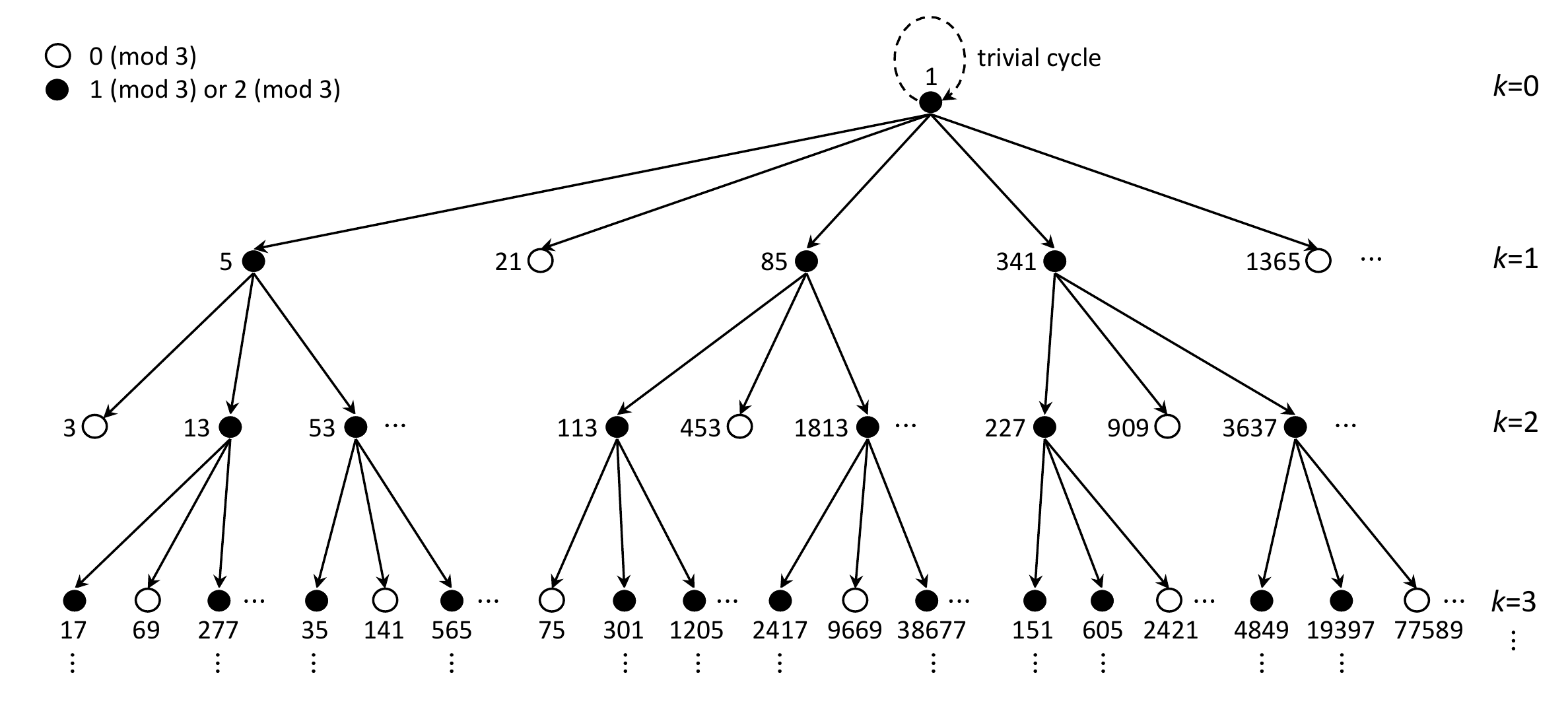}}
\caption{\label{Fig1} Structure of $G$.}
\end{figure}

\section{Constructing $G$} \label{sec2}
Given adjacent vertices $u$ and $v$, denote by $e=(u, v)$ the directed edge from $u$ to $v$, which we call \textit{parent} and \textit{child}, respectively. Children of the same parent are \textit{siblings} and form a \textit{sibling set}. Applying \eqref{eq3} leads to the \textit{transition rule} as
\begin{equation} \label{eq4}
	g_{n}(u) = v_n =
\begin{dcases}
	\emptyset, & \text{if } u \equiv [0]_3, \\
	\dfrac{2^{2n}u-1}{3}, & \text{if } u \equiv [1]_3, \\
	\dfrac{2^{2n-1}u-1}{3}, & \text{if } u \equiv [2]_3,
\end{dcases}
\end{equation}
for $n \in \mathbb{N}$. Hence, each $u \not \equiv [0]_3$ at \textit{depth} $k \in \mathbb{N}_0$ in $G$ is \textit{locally infinite} since $g_{n}(u)$ yields infinitely many children $\{v_n\}_{n \in \mathbb{N}}$. It follows that every $u \not \equiv [0]_3$ is a \textit{branching} (\textit{inner}) node, with infinitely many outgoing edges $\{e_n=(u, v_n)\}_{n \in \mathbb{N}}$, while $u \equiv [0]_3$ is a \textit{leaf} (\textit{terminal}) node, without outgoing edges. Denote by $\deg^{-}(u)$ and $\deg^{+}(u)$ the \textit{indegree} (number of incoming edges) and \textit{outdegree} (number of outgoing edges), respectively, of any $u \in V(G)$. Excluding the trivial cycle, we have for $u \in \mathbb{U}$ that
\begin{equation} \label{eq5}
	\deg^{-}(u) = 
	\begin{dcases} 
		0 \quad \text{if } u = 1, \\
		1 \quad \text{if } u > 1,
	\end{dcases}
	\quad \deg^{+}(u) =
\begin{dcases}
		0, & \text{if } u \equiv [0]_3, \\
		\aleph_0, & \text{if } u \not \equiv [0]_3,
\end{dcases}
\end{equation}
where $\aleph_0$ denotes the cardinality of $\mathbb{N}$. For $n \in \mathbb{N}$, let
\begin{equation} \label{eq6}
	z_n \coloneqq \dfrac{2^{2n}-1}{3} = \sum_{i=1}^{n} 2^{2(i-1)} = 1 + 4 + \cdots + 4^{n-1}
\end{equation}
to form the infinite strictly ascending sequence denoted by
\begin{equation} \label{eq7}
	Z = \{z_n\}_{n \in \mathbb{N}} = \{1, 5, 21, 85, 341, \ldots\}.
\end{equation}
Then for $u \in \mathbb{U}$, write $u \equiv [r]_3$ as $u=3\mu_r+r$ to restate \eqref{eq4} as
\begin{equation} \label{eq8}
	g_{n} (u) = v_n =
\begin{dcases}
	\emptyset, & \text{if } u \equiv [0]_3,\\
	z_n + 2^{2n}\mu_1, & \text{if } u \equiv [1]_3,\\
	z_n + 2^{2n-1}\mu_2, & \text{if } u \equiv [2]_3,
\end{dcases}
\end{equation}
for $n \in \mathbb{N}$. Also note the recurrence relation
\begin{equation} \label{eq9}
\begin{aligned}
	h(v_n) & = v_{n+1} =1+ 4v_n, \\
	h_{n}(v_1) & =  v_{n+1} = z_n + 4^{n}v_1.
\end{aligned}
\end{equation}
With \eqref{eq8} or \eqref{eq9}, one can specify the sequence of vertices in the sibling set of any parent $u \in \mathbb{U}_p$ in $G$.

Denote by $H_{k}(u)$ the sibling set at $k \in \mathbb{N}$ arising from $u \in \mathbb{U}_p$ at $k-1$ and let $\Pi_k$ be the union of all vertices at any $k \in \mathbb{N}_0$ in $G$. Suppose $\tau$ is the root of $G$, which is a lone vertex at $k=0$, then $\Pi_0=\{\tau\}$ is a singleton subset of $V(G)$. Applying \eqref{eq4} or \eqref{eq8} on $\tau$ yields
\begin{equation} \label{eq10}
	H_{1}(\tau) =\{g_{n}(\tau)\}_{n\in \mathbb{N}}=\{v_n\}_{n \in \mathbb{N}}=Z
\end{equation}
at $k=1$, where one should note that $e=(\tau, v_1)$ forms the trivial cycle. Since the root is a lone vertex at $k=0$, only a single sibling set with infinitely many elements exists at $k=1$. Excluding the trivial cycle, we have that
\begin{equation} \label{eq11}
	\Pi_1 \coloneqq H_{1}(\tau) \setminus \{1\} = Z \setminus \{1\},
\end{equation}
such that $\Pi_0 \cup \Pi_1 = Z$. Note that $\lvert \Pi_0 \rvert = 1$ and $\lvert \Pi_1 \rvert = \aleph_0$, where $\lvert A \rvert$ denotes the cardinality of $A$. In turn, every $v_n \not \equiv [0]_3$ in $\Pi_1$ generates an infinite sibling set at $k=2$, i.e. $\lvert H_{2}(v_n) \rvert = \aleph_0$ for every $v_n \in \mathbb{U}_p$ at $k=1$. Since $\lvert \{v_n \in \Pi_1 \colon v_n \in \mathbb{U}_p\} \rvert = \aleph_0$, it follows that a countably infinite number of siblings sets exist at $k=2$, where $\lvert \Pi_2 \rvert = \aleph_0$ given that $\aleph_0 \times \aleph_0 = \aleph_0$. In general, if $u_k \in V(G)$ is any vertex at $k \in \mathbb{N}_0$ in $G$, i.e. $u_k \in \Pi_k$, then we have that 
\begin{equation} \label{eq12}
	\Pi_{k+1} = \bigcup _{u_k \in \mathbb{U}_p} H_{k+1}(u_k).
\end{equation}
One can also restate \eqref{eq12} using a multi-index that specifically identifies each sibling set at any $k$, but such a notation becomes complicated as $k$ increases in $G$. We thus ignore any specific reference to every sibling set at higher $k$, without loss of generality. Note that
\[
\begin{aligned}
	\lvert \Pi_0 \rvert &=1 \quad \lvert \Pi_1 \rvert =\aleph_0,  \quad \lvert \Pi_2 \rvert =\aleph_0 \times \aleph_0 = \aleph_0, \quad \ldots \\
	\lvert \Pi_k \rvert &= \overbrace{\aleph_0 \times \aleph_0 \times \cdots \times \aleph_0}^{k} = \aleph_0.
\end{aligned}
\]
The above pattern holds from the root to infinitely increasing $k$, such that
\begin{equation} \label{eq13}
	\lvert \Pi_k \rvert =
\begin{dcases}
	1, & \text{if } k=0,\\
	\aleph_0, & \text{if } k>0.
\end{dcases}
\end{equation}
Hence, from the root, $G$ recursively assembles into a nested hierarchy of infinitely many sibling sets, each having a cardinality $\aleph_0$ (Figure \ref{Fig1}). As shown below, each sibling set uniquely represents an infinite sequence of odd positive integers. It is implicit from \eqref{eq13} that $G$ is both vertically infinite from the root and horizontally infinite except for the root.

\section{Patterns in $G$} \label{sec3}
Let $u \in V(G)$ and define $P \coloneqq \{u \in V(G) \colon u \in \mathbb{U}_p\}$ as the subset containing all parent nodes in $G$, such that $P \subset V(G)$. From \eqref{eq12}, it follows that $\lvert P \rvert = \aleph_0$, which implies that there exist infinitely many sibling sets in $G$. Observe that \eqref{eq8} and \eqref{eq9} permute the residue class $\pmod{3}$ of the vertices in every sibling set. The notation $\{(x, y, z)\}$ indicates that $(x, y, z)$ forms a cycle in the sequence, i.e $\{(x, y, z)\} = \{x, y, z, x, y, z, \ldots\}$ and $\{u, (x, y, z)\} = \{u, x, y, z, x, y, z, \ldots\}$. It is understood that $u \in \mathbb{U}_p$ at $k-1$ for any sibling set $H_{k}(u) = \{v_n\}_{n \in \mathbb{N}}$. We record the fact that
\begin{lemma} \label{lem1}
	The residue class $\pmod{3}$ of the vertices in $H_{k}(u)=\{v_n\}_{n \in \mathbb{N}}$ for $k>0$ takes the cycle $\{(r, r+1, r+2)\}\pmod{3}$, where $v_1 \equiv [r]_3$.
\end{lemma}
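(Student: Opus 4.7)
The plan is to read the cycle structure directly off the linear recurrence \eqref{eq9}, which expresses every element of a sibling set in terms of its predecessor.

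First I would recall from \eqref{eq9} that $v_{n+1} = 1 + 4 v_n$ for all $n \in \mathbb{N}$, regardless of the residue class of the parent $u$. Reducing this identity modulo $3$ and using $4 \equiv 1 \pmod{3}$ gives the one-step congruence
\begin{equation*}
	v_{n+1} \equiv v_n + 1 \pmod{3},
\end{equation*}
so the residues of $\{v_n\}_{n \in \mathbb{N}}$ form an arithmetic progression of common difference $1$ modulo $3$.

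Next I would apply induction on $n$ to conclude that $v_n \equiv v_1 + (n-1) \equiv r + (n-1) \pmod{3}$, with $v_1 \equiv [r]_3$ as given. Evaluating at $n = 1, 2, 3$ produces residues $r, r+1, r+2 \pmod{3}$, and at $n = 4$ we return to $r + 3 \equiv r \pmod{3}$. Hence the residues of $\{v_n\}_{n \in \mathbb{N}}$ repeat with period exactly $3$, traversing the cycle $\{(r, r+1, r+2)\} \pmod{3}$ as claimed.

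No substantive obstacle is expected: the argument is essentially a single-line reduction of \eqref{eq9} modulo $3$. The only point worth a brief comment is that the recurrence \eqref{eq9} is derived from \eqref{eq8} independently of whether the parent $u$ satisfies $u \equiv [1]_3$ or $u \equiv [2]_3$ (the two cases of \eqref{eq4} producing nonempty sibling sets), so the cycle description applies uniformly to every sibling set in $G$.
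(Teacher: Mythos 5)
Your proof is correct and rests on the same ingredient as the paper's, namely the recurrence $v_{n+1}=1+4v_n$ from \eqref{eq9}; the paper simply substitutes $v_1=3\mu_r+r$ and displays $v_2,v_3$ explicitly in each of the three cases $r=0,1,2$. Your reduction $v_{n+1}\equiv v_n+1\pmod{3}$ followed by induction is the cleaner rendering of the same idea, and it has the minor advantage of covering all $n$ uniformly rather than leaving the general term to an ellipsis.
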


\begin{proof}
	Write $v_1 \equiv [r]_3$ as $v_1 = 3\mu_r + r$ for $r=0, 1, 2$ and apply \eqref{eq9} to yield
\begin{equation} \label{eq14}
\begin{aligned}
	v_1 \equiv [0]_3, \quad & v_2 = 3(4\mu_0)+1, & v_3 &= 3(4^{2}\mu_0+1)+2, & \ldots \\
	v_1 \equiv [1]_3, \quad & v_2 = 3(4\mu_1+1)+2, & v_3 &= 3(4^{2}\mu_1+7), & \ldots \\
	v_1 \equiv [2]_3, \quad & v_2 = 3(4\mu_2+3), & v_3 &= 3(4^{2}\mu_2+12)+1, & \ldots
\end{aligned}
\end{equation}
which correspond to the claim of the lemma.
\end{proof}

Let $w_n$ be the multiple of $z_n \equiv [r]_3$. Write $z_n = 3\mu_r+r$ for $n \in \mathbb{N}$, such that
\begin{equation} \label{eq15}
	w_n = \dfrac{z_n - r}{3} =
\begin{dcases}
	7 \sum_{i=1}^{n/3} 4^{3(i-1)}, & \text{if } n \equiv [0]_3, \\
	7 \sum_{i=1}^{(n-1)/3} 4^{3(i-1)+1}, & \text{if } n \equiv [1]_3, \\
	1 + 7 \sum_{i=1}^{(n-2)/3} 4^{3(i-1)+2}, & \text{if } n \equiv [2]_3.
\end{dcases}
\end{equation}
From \eqref{eq15}, it is easy to see that $w_n = \mu_r$ for $n \equiv [r]_3$ in $z_n$. Denote by $W$ the set of multiples corresponding to the elements in $Z$. Then
\begin{equation} \label{eq16}
	W = \{w_n\}_{n \in \mathbb{N}} = \{0, 1, 7, 28, 113, 455, \ldots\}.
\end{equation}
Similarly, let $M_{k}(u) = \{m_n\}_{n \in \mathbb{N}}$ be the set of multiples corresponding to the vertices in $H_{k}(u)=\{v_n\}_{n \in \mathbb{N}}$ at $k>0$, such that for any $v_n \equiv [r]_3$ we have that
\begin{equation} \label{eq17}
	m_n = \dfrac{v_n - r}{3}.
\end{equation}
Note that $M_{1}(\tau) = W \setminus \{0\}$ given that $H_{1}(\tau) = Z \setminus \{1\}$ in \eqref{eq11}, where $\tau$ is the root of $G$. A general pattern emerges as 
\begin{lemma} \label{lem2}
	Every $M_{k}(u) = \{m_n\}_{n \in \mathbb{N}}$ at $k>0$ is an infinite strictly ascending sequence as
\begin{equation} \label{eq18}
	M_{k}(u) =
\begin{dcases}
	\{\mu_0, w_{n-1} + 4^{n-1}\mu_0\}_{n>1}, & \textnormal{if } v_1 \equiv [0]_3, \\
	\{\mu_1, w_n + 4^{n-1}\mu_1\}_{n>1}, & \textnormal{if } v_1 \equiv [1]_3, \\
	\{\mu_2, w_{n+1} + 4^{n-1}(\mu_2-1)\}_{n>1}, & \textnormal{if } v_1 \equiv [2]_3, 
\end{dcases}
\end{equation}
where $m_1=\mu_r$ is the multiple of $v_1 \equiv [r]_3$ and $w_n$ is as above.
\end{lemma}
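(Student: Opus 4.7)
The plan is to iterate the recurrence $v_{n+1} = 1+4v_n$ from \eqref{eq9} into the closed form $v_n = z_{n-1}+4^{n-1}v_1$ for $n\ge 2$, substitute the residue representation $v_1 = 3\mu_r + r$, and then regroup so that the residue $v_n\bmod 3$ dictated by Lemma~\ref{lem1} is separated from the multiple. The case $n=1$ is immediate, since $m_1 = (v_1-r)/3 = \mu_r$ by definition of $\mu_r$.

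For $n>1$, substitution yields $v_n = z_{n-1} + r\cdot 4^{n-1} + 3\cdot 4^{n-1}\mu_r$, and the task reduces to rewriting $z_{n-1}+r\cdot 4^{n-1}$ in the form $3w + (v_n\bmod 3)$ in each of the three residue classes. If $v_1\equiv[0]_3$, no shift is needed and $v_n \equiv z_{n-1}\pmod 3$, giving $m_n = w_{n-1}+4^{n-1}\mu_0$. If $v_1\equiv[1]_3$, the identity $z_{n-1}+4^{n-1}=z_n$ (immediate from \eqref{eq6}) produces $v_n = z_n+3\cdot 4^{n-1}\mu_1$ and hence $m_n = w_n+4^{n-1}\mu_1$. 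If $v_1\equiv[2]_3$, applying the same identity twice yields $z_{n-1}+2\cdot 4^{n-1} = z_{n+1}-3\cdot 4^{n-1}$, so that $v_n = z_{n+1}+3\cdot 4^{n-1}(\mu_2-1)$ and $m_n = w_{n+1}+4^{n-1}(\mu_2-1)$. The infinite cardinality of $M_{k}(u)$ is inherited from $|H_{k}(u)|=\aleph_0$ already established in Section~\ref{sec2}, while strict ascent is forced by $v_{n+1}-v_n = 3v_n+1 \ge 4$, which dominates any residue perturbation $r_{n+1}-r_n\in\{-2,-1,0,1,2\}$ and thus makes $m_{n+1}-m_n \ge 1$.

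The main obstacle is clerical rather than conceptual: one must keep Lemma~\ref{lem1}'s residue cycle aligned with the correct index shift of $w$ in each branch. The $[2]_3$ case is the fiddliest, since the excess $-3\cdot 4^{n-1}$ produced by the double shift has to be absorbed into the multiple by replacing $\mu_2$ with $\mu_2-1$ while $w$ is simultaneously advanced by two indices. Once the three substitutions are written out term by term, the claimed formulas for $M_{k}(u)$ follow directly, and no ingredient beyond \eqref{eq6}, \eqref{eq9}, and Lemma~\ref{lem1} is needed.
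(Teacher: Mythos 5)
Your computation is correct and is exactly the verification the paper's one-sentence proof defers to: iterating \eqref{eq9} to get $v_n = z_{n-1}+4^{n-1}v_1$, substituting $v_1=3\mu_r+r$, and using $z_n=z_{n-1}+4^{n-1}$ to realign the index of $w$ in each residue class (including the $\mu_2-1$ absorption in the $[2]_3$ case), with strict monotonicity following from $v_{n+1}-v_n=3v_n+1$. Nothing further is needed.
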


\begin{proof}
	One can easily check that the sequences in \eqref{eq18} results from successively iterating \eqref{eq9} on $v_1 \equiv [r]_3$ written as $v_1 = 3\mu_r + r$.
\end{proof}

\begin{lemma} \label{lem3}
	For every $H_{k}(u)=\{v_n\}_{n \in \mathbb{N}}$, write $u = 3\mu_r + r$, where $r=1, 2$. Given that $g_n(u)=v_n$, we have that
\begin{equation} \label{eq19}
\begin{aligned}
	u \equiv [1]_3 \colon \quad v_1 &=u+\mu_1, \quad v_2 = 2^{2}u + v_1, \quad v_3 = 2^{4}u + v_2, \quad \ldots \\
							v_n & = 2^{2(n-1)}u + v_{n-1} = u \sum_{i=1}^{n-1} 2^{2i} + v_1, \\						
	u \equiv [2]_3 \colon \quad v_1 &=(u+\mu_2)/2, \quad v_2 = 2u + v_1, \quad v_3 = 2^{3}u + v_2, \quad \ldots \\
							v_n &= 2^{2n-3}u + v_{n-1} = u \sum_{i=1}^{n-1} 2^{2i-1} + v_1.
\end{aligned}
\end{equation}
\end{lemma}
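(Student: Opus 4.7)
The plan is to derive the claimed identities directly from the transition rule \eqref{eq4}, since the formulas in \eqref{eq19} are equivalent reformulations of $g_n(u) = v_n$ once $u$ is written as $3\mu_r + r$. I would treat the two residue classes separately but in parallel, since the structure of the argument is the same; the only difference is the parity of the exponent.

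First, for the base case, I would substitute $u = 3\mu_1 + 1$ into $v_1 = (2^2 u - 1)/3$ and simplify to obtain $v_1 = 4\mu_1 + 1$, and then observe that $u + \mu_1 = (3\mu_1 + 1) + \mu_1 = 4\mu_1 + 1$, matching the claim. The analogous computation for $u = 3\mu_2 + 2$ gives $v_1 = (2u-1)/3 = 2\mu_2 + 1$, which equals $(u + \mu_2)/2$. This handles the starting value in each case.

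Second, for the recurrence and the telescoped sum, I would form the difference $v_n - v_{n-1}$ using the closed form from \eqref{eq4}. In the case $u \equiv [1]_3$, this yields
\[
v_n - v_{n-1} = \frac{2^{2n} - 2^{2(n-1)}}{3}\, u = \frac{2^{2(n-1)}(4-1)}{3}\, u = 2^{2(n-1)} u,
\]
which is exactly the stated one-step recurrence $v_n = 2^{2(n-1)} u + v_{n-1}$. Iterating (or equivalently telescoping from $v_1$) gives $v_n = v_1 + u \sum_{i=1}^{n-1} 2^{2i}$. The case $u \equiv [2]_3$ is identical in form: the difference becomes $2^{2n-3}(4-1)u/3 = 2^{2n-3} u$, yielding $v_n = 2^{2n-3}u + v_{n-1}$ and, after telescoping, $v_n = v_1 + u \sum_{i=1}^{n-1} 2^{2i-1}$.

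There is no real obstacle here, as the lemma is essentially algebraic bookkeeping on \eqref{eq4}; the only point that requires mild care is confirming that the closed form $v_1 = (u + \mu_r)/2^{r-1}$ (i.e.\ $v_1 = u + \mu_1$ or $v_1 = (u+\mu_2)/2$) agrees with the direct substitution, after which the recurrence propagates the identity to all $n$. A short induction on $n$, using the recurrence just derived as the inductive step and the base case computed above, cleanly completes the proof.
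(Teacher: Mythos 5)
Your proposal is correct and follows essentially the same route as the paper: compute $v_1$ by direct substitution of $u = 3\mu_r + r$ into \eqref{eq4}, then establish the one-step recurrence and telescope. The only cosmetic difference is that you obtain the recurrence by differencing the closed form $v_n = (2^{e}u-1)/3$ directly, whereas the paper invokes the equivalent relation $v_{n+1} = 1 + 4v_n$ from \eqref{eq9}; your version is, if anything, slightly more self-contained.
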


\begin{proof}
	From \eqref{eq4}, we obtain
\[ 
	v_1 = 
\begin{dcases}
	\dfrac{2^{2}(3\mu_1 + 1)-1}{3} = u + \mu_1, & \text{if } u \equiv [1]_3, \\
	\dfrac{2(3\mu_2 + 2)-1}{3} = \dfrac{u + \mu_2}{2}, & \text{if } u \equiv [2]_3.
\end{dcases}
\]
Then repeatedly applying \eqref{eq9} to find $v_n$ given $u \equiv [r]_3$ leads to \eqref{eq19}.
\end{proof}

Both \eqref{eq18} and  \eqref{eq19} also provide alternative ways to enumerate the sequence of vertices in $H_{k}(u)=\{v_n\}_{n \in \mathbb{N}}$ arising from any $u \in \mathbb{U}_p$. Call $v_1$ the \textit{initial vertex} in $H_{k}(u)$. From Lemma \ref{lem2}, it is clear that the sequence of initial vertices in all sibling sets at $k>1$ is monotone increasing from left to right. Recall that $\mu_1 \in \mathbb{E}$ and $\mu_0, \mu_2 \in \mathbb{U}$, thus assuring that $v_n \in \mathbb{U}$ for all $n \in \mathbb{N}$ given $u \equiv [r]_3$, where $r = 1, 2$. From \eqref{eq19}, we have that
\begin{equation} \label{eq20}
	v_{n+1}-v_n =
\begin{dcases}
	2^{2n}u, & \text{if} \ u \equiv [1]_3, \\
	2^{2n-1}u, & \text{if} \ u \equiv [2]_3,
\end{dcases}
\end{equation}
which implies that the interval between consecutive vertices in any sibling set is monotone increasing to infinity. Along with Lemma \ref{lem3}, the importance of \eqref{eq20} to our argument shall be evident subsequently. At this point, it is also relevant to note that

\begin{lemma} \label{lem4}
	Let $u_i, u_{i+1} \in \mathbb{U}_p$ be successive nodes in a sibling set at $k>0$ in $G$. If $u_i \equiv [1]_3$ for some $i \in \mathbb{N}$, then  
\begin{equation} \label{eq21}
\begin{aligned}
	g(u_i) & = v_1(u_i)= 1+ 4\mu_1 = \mu_2, \\
	g(u_{i+1}) & = v_1(u_{i+1}) = 1+2v_1(u_i) = 3+8\mu_1,
\end{aligned}
\end{equation}
where $v_1(u_i)$ and $v_1(u_{i+1})$ are the initial vertices in $H_{k+1}(u_i)$ and  $H_{k+1}(u_{i+1})$, and $\mu_1$ and $\mu_2$ are the multiples of $u_i$ and $u_{i+1}\pmod{3}$, respectively.
\end{lemma}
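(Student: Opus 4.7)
The lemma can be established by a direct computation that combines Lemma \ref{lem1} with Lemma \ref{lem3} and the sibling recurrence $h(v_n)=1+4v_n$ from \eqref{eq9}. The plan is first to pin down the residue class of $u_{i+1}\pmod 3$, then to write $u_{i+1}$ explicitly in terms of $u_i$ by applying the recurrence, and finally to substitute into the two cases of Lemma \ref{lem3} for $v_1$.

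For the first step, I invoke Lemma \ref{lem1}: in any sibling set the residues $\pmod 3$ cycle as $\{(r,r+1,r+2)\}\pmod 3$, so $u_i\equiv [1]_3$ forces $u_{i+1}\equiv [2]_3$, and hence Lemma \ref{lem3} will apply in the $[1]_3$ case to $u_i$ and in the $[2]_3$ case to $u_{i+1}$. For the second step, I write $u_i = 3\mu_1+1$ and apply the recurrence \eqref{eq9} to obtain $u_{i+1}=1+4u_i = 12\mu_1+5 = 3(4\mu_1+1)+2$, which identifies the multiple of $u_{i+1}\equiv [2]_3$ as $\mu_2 = 4\mu_1+1$.

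For the third step, Lemma \ref{lem3} gives $v_1(u_i)=u_i+\mu_1=4\mu_1+1$, and this already coincides with the $\mu_2$ just computed, yielding the first claim $v_1(u_i)=1+4\mu_1=\mu_2$. Similarly, Lemma \ref{lem3} gives $v_1(u_{i+1})=(u_{i+1}+\mu_2)/2$; substituting $u_{i+1}=12\mu_1+5$ and $\mu_2=4\mu_1+1$ yields $v_1(u_{i+1})=8\mu_1+3$, which is immediately recognized as $1+2(4\mu_1+1)=1+2v_1(u_i)$, proving the second claim. I do not expect any real obstacle here: the lemma is essentially bookkeeping, and the only point that needs care is the interpretation of \emph{successive siblings} — namely that $u_i$ and $u_{i+1}$ correspond to consecutive values of the index $n$ in the transition rule \eqref{eq4}, so that the one-step recurrence $v_{n+1}=1+4v_n$ from \eqref{eq9} is indeed the correct tool in step two.
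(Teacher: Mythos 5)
Your proof is correct and follows essentially the same route as the paper: identify $u_{i+1}\equiv[2]_3$ with multiple $\mu_2=4\mu_1+1$ via the sibling recurrence $u_{i+1}=1+4u_i$, then compute the two initial vertices from the transition formulas (you use the Lemma \ref{lem3} form $v_1=u+\mu_1$ and $v_1=(u+\mu_2)/2$, the paper uses the equivalent \eqref{eq8} form $1+4\mu_1$ and $1+2\mu_2$). If anything, your derivation of $\mu_2=4\mu_1+1$ directly from the recurrence is slightly more explicit than the paper's appeal to Lemma \ref{lem1}, whose statement only records the residue cycle and not the multiple relation.
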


\begin{proof}
	From Lemma \ref{lem1}, if $u_i \equiv [1]_3$ with multiple $\mu_1$, then $u_{i+1} \equiv [2]_3$ with $\mu_2 = 1+4\mu_1$. Applying \eqref{eq8} yields $g(u_i)=v_1(u_i)=1+4\mu_1$ and $g(u_{i+1})=v_1(u_{i+1})=1+2\mu_2$. By substitution, $v_1(u_i) = \mu_2$ and $v_1(u_{i+1}) = 1+2v_1(u_i)=3+8\mu_1$, as stated in \eqref{eq21}.
\end{proof}

Given the monotone increasing sequence of infinitely many vertices in every sibling set, the least element in $\Pi_k$ is the initial vertex of the first (leftmost) sibling set at any $k \in \mathbb{N}$ in $G$. Lemma \ref{lem4} also implies that the least parent node in $\Pi_k$ always yields the least element in $\Pi_{k+1}$.

\section{Collatz is Right} \label{sec4}
Call $u \in \mathbb{U}_p > 1$ a \textit{non-trivial} parent in $G$. One easily infers from \eqref{eq8} that no such $u$ iterates to itself, i.e. $g(u) \neq u$ for any $u \in \mathbb{U}_p >1$. In addition, no $u \in \mathbb{U}_p$, including the root, can give rise to two or more equal siblings since every sibling set is an infinite strictly ascending sequence. Hence, the only scenario where a non-trivial cycle exists in $G$ rests on the possibility that distinct parents can yield equal children belonging to separate sibling sets under $g$ iteration. Assume that there exist such \textit{non-sibling} nodes $v_n, v_m \in V(G)$, where $v_n=v_m$ for $n, m \in \mathbb{N}$ and, necessarily, $n \neq m$. Without loss of generality, let $n < m$, with $v_n$ and $v_m$ being at the same or different $k$ in $G$. Suppose $u_i$ and $u_j$, where $u_i \neq u_j$, are the non-trivial parents of $v_n$ and $v_m$, respectively. We consider two cases where $u_i \equiv u_j\pmod{3}$ or $u_i \not \equiv u_j\pmod{3}$. Noting that $G$ excludes the trivial cycle, we show that
\begin{lemma} [Uniqueness] \label{lem5}
	Every vertex in $G$ is unique for all $k \in \mathbb{N}_0$.
\end{lemma}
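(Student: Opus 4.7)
The plan is to argue by strong induction on the depth $k$, proving that every integer value appearing at depth $k$ in $G$ is absent from every other vertex. Sibling-internal uniqueness comes for free from the strict monotonicity of each sibling set (Lemma~\ref{lem2} together with \eqref{eq20}), so the real work lies in the non-sibling case set up just before the lemma statement. The base covers $k \in \{0,1\}$: only $\tau = 1$ sits at $k = 0$, while $\Pi_1 = Z \setminus \{1\}$ is strictly ascending by \eqref{eq6}--\eqref{eq7} and omits $1$, so both layers are unique and the value $1$ is confirmed to live only at the root.

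For the inductive step I would assume uniqueness through depth $k-1$ and consider graph vertices $V = g_n(u_i)$ at depth $k$ and $V' = g_m(u_j)$ at depth $\le k$ with $V = V'$ as integers and $u_i \ne u_j$ as graph vertices, then split on the residues of the parents exactly as in the author's setup. In case (i) $u_i \equiv u_j \pmod 3$, equating images via the matching branch of \eqref{eq4} gives $2^{a} u_i = 2^{b} u_j$, with $(a,b) = (2n,2m)$ or $(2n-1,2m-1)$; the oddness of $u_i, u_j$ and the uniqueness of the $2$-adic valuation force $a = b$, hence $n = m$ and $u_i = u_j$ as integers. Since both parents lie at depth $<k$, the inductive hypothesis upgrades integer equality to graph-vertex equality, whence $V$ and $V'$ share a parent and a sibling index and therefore coincide, contradicting distinctness. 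In case (ii) $u_i \not\equiv u_j \pmod 3$, both parents being in $\mathbb{U}_p$ force one to be $\equiv [1]_3$ and the other $\equiv [2]_3$; equating images then yields $2^{2n} u_i = 2^{2m-1} u_j$ (or its mirror), whose two sides have $2$-adic valuations of opposite parity---an immediate contradiction.

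The step I expect to be the main obstacle is the bookkeeping around the excluded trivial cycle, specifically ruling out that the value $1$ reappears at any positive depth (the sub-case $V' = \tau$, where the inductive hypothesis as stated needs an extra anchor). I would handle this by observing that $(2^{e} u - 1)/3 = 1$ forces $(u,e) = (1,2)$, which is precisely the edge $(\tau, v_1)$ already struck from $G$, and by using the inductive hypothesis to exclude $1$ from all depths in $\{1,\ldots,k-1\}$, so that no parent at depth $k-1$ can re-emit $1$. Propagating this anchor cleanly through the induction---rather than the $2$-adic arithmetic itself---is the delicate part; with it in place, the case (i)/(ii) dichotomy above closes the induction and establishes the lemma.
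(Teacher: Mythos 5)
Your proof is correct, and its arithmetic heart coincides with the paper's: both reduce the hypothesis of two equal non-sibling children to the single equation $2^{a}u_i=2^{b}u_j$ with $u_i,u_j$ odd (the paper writes this as $z_n+2^{2n}\mu_1=z_m+2^{2m-1}\mu_2$ and its analogue, which is the same identity after substituting $u=3\mu_r+r$) and then derive a parity contradiction. The difference is in how the contradiction is extracted and in the surrounding scaffolding. The paper solves for $\mu_1$ and observes it would be forced odd, contradicting $\mu_1\in\mathbb{E}$; you instead compare $2$-adic valuations of the two sides, which is shorter and makes transparent that the whole lemma is just the statement that $f$ is a well-defined function (each odd $v$ determines $3v+1=2^{e}u$ with $u$ odd uniquely), so $g$ cannot send two distinct parent--index pairs to the same value. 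More substantively, your explicit strong induction on depth, together with the anchor ruling out a reappearance of the value $1$, repairs a gap the paper leaves implicit: the paper only shows that \emph{distinct} parents cannot emit equal children, and then treats ``distinct parents'' as ``distinct integers,'' which silently assumes the very uniqueness being proved at lower depths (the same integer could a priori label two different tree positions). Your version upgrades ``equal parents as integers'' to ``equal parents as vertices'' via the inductive hypothesis and handles the root separately by noting $(2^{e}u-1)/3=1$ forces $(u,e)=(1,2)$, the excised trivial edge. So your argument buys rigor on the tree-structural side at no extra arithmetic cost; the paper's buys nothing over yours except that its parity-of-multiples phrasing matches the $\mu_r$ notation used elsewhere in the text.
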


\begin{proof}
	For $v_n=v_m$, where $u_i \not \equiv u_j\pmod{3}$, assume that $u_i \equiv [1]_3$ and $u_j \equiv [2]_3$, with multiples $\mu_1$ and $\mu_2$, respectively. If $n<m$, then $\mu_1>\mu_2$. From \eqref{eq8}, we have that
\[
	z_n + 2^{2n}\mu_1 = z_m + 2^{2m-1}\mu_2.
\]
Let $m=n+d$, where $d>0$, such that
\[
\begin{aligned}
	\sum_{i=1}^{n} 2^{2(i-1)} - \sum_{i=1}^{n+d} 2^{2(i-1)} & = 2^{2(n+d)-1}\mu_2 - 2^{2n}\mu_1, \\
	- \sum_{i=n}^{n+d} 2^{2(i-1)} & = 2^{2n} \left(2^{2d-1}\mu_2 - \mu_1\right), \\
	- \sum_{i=1}^{d} 2^{2(i-1)} & = 2^{2d-1}\mu_2 - \mu_1.
\end{aligned}
\]
Expressing for $\mu_1$, we get
\[
	\mu_1 = 2^{2d-1}\mu_2 + \sum_{i=1}^{d} 2^{2(i-1)},
\]
which is always odd and thus contradicting the fact that $\mu_1 \in \mathbb{E}$. In the case where $u_i \equiv u_j\pmod{3}$, it suffices to test only for $u_i \equiv u_j \equiv [1]_3$. For distinction, let $\mu_1$ and $\nu_1$, where $\mu_1>\nu_1$, be the multiples of $u_i $\ and $u_j\pmod{3}$, respectively, such that
\[
	z_n + 2^{2n}\mu_1 = z_m + 2^{2m}\nu_1.
\]
As above, we obtain
\[
\begin{aligned}
	\sum_{i=1}^{n} 2^{2(i-1)} - \sum_{i=1}^{n+d} 2^{2(i-1)} & = 2^{2(n+d)}\nu_1 - 2^{2n}\mu_1, \\
	- \sum_{i=n}^{n+d} 2^{2(i-1)} & = 2^{2n} \left(2^{2d}\nu_1 - \mu_1\right), \\
	- \sum_{i=1}^{d} 2^{2(i-1)} & = 2^{2d}\nu_1 - \mu_1,
\end{aligned}
\]
leading to
\[
	\mu_1 = 2^{2d}\nu_1 + \sum_{i=1}^{d} 2^{2(i-1)},
\]
which also contradicts the condition that $\mu_1 \in \mathbb{E}$. Thus, both results imply that no distinct non-trivial parents can generate equal children (non-siblings), which validates the claim of the lemma that every vertex in $G$ is unique.
\end{proof}

Lemma \ref{lem5} has the immediate consequence that
\[
	\bigcap_{k \in \mathbb{N}} \bigcap_{u \in \mathbb{U}_p} H_{k} (u) = \bigcap_{k \in \mathbb{N}_0} \Pi_k = \emptyset.
\]
A \textit{non-trivial cycle} has a sequence of vertices that does not include $1$. Lemma \ref{lem5} implies that no non-trivial cycles exist in $G$. For any $u \in V(G)$, where $u \in \mathbb{U}$, there exists $q \in \mathbb{N}$ and $r, m \in \mathbb{N}_0$, such that
\[
	u - r  = m \cdot q \quad \Longleftrightarrow \quad u \equiv r\pmod{q}.
\]
A system of $n$ linear congruences of the form
\[
	u \equiv r_i\pmod{q_i}, \quad i \in \{1,2,3,\ldots, n\}
\]
is called a \textit{covering system}, or simply a \textit{covering}, of $\mathbb{U}$ if every $u \in \mathbb{U}$ belongs to at least one of the residue classes. The concept of covering systems was first introduced by Erd{\H o}s in 1950 (\cite{Erdo}) to prove that a positive proportion of $\mathbb{U}$ cannot be expressed as the sum of a prime number and a power of $2$. The modulus $q_i$ in a covering system may be the same or distinct between residue classes. In particular, a covering of $\mathbb{U}$ is \textit{exact} if every $u$ belongs to exactly one of the residue classes. We next show that
\begin{lemma} [Completeness] \label{lem6}
	$V(G) = \mathbb{U}.$
\end{lemma}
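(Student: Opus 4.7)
The plan is to establish $V(G) = \mathbb{U}$ by combining a minimal-counterexample argument with the exact-covering structure implicit in equation (8). Assume for contradiction that $V(G) \subsetneq \mathbb{U}$ and choose $v^{*} \in \mathbb{U} \setminus V(G)$ minimal. Since $\tau = 1 \in \Pi_{0} \subset V(G)$, we have $v^{*} > 1$, and $v^{*}$ admits a uniquely determined forward image $u^{*} = f(v^{*}) \in \mathbb{U}$, which must be the only candidate parent of $v^{*}$ in $G$. The immediate task is to show $u^{*} \in V(G)$, which by minimality holds whenever $u^{*} < v^{*}$; otherwise one must continue up the forward chain $v^{*} \mapsto f(v^{*}) \mapsto f^{2}(v^{*}) \mapsto \cdots$ and argue that it terminates at the root.

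I would next invoke the covering-system framework. Using equation (8), the class ``children at position $n$ of a parent $\equiv 1 \pmod{3}$'' is the residue class $v \equiv z_{n} \pmod{2^{2n+1}}$, while ``children at position $n$ of a parent $\equiv 2 \pmod{3}$'' is $v \equiv z_{n} + 2^{2n-1} \pmod{2^{2n}}$. Summing the relative densities inside $\mathbb{U}$ yields $\sum_{n \geq 1} \bigl(1/4^{n} + 2/4^{n}\bigr) = 1$, and one checks that the residues in question are pairwise disjoint. This gives an exact covering of $\mathbb{U}$, so every $v \in \mathbb{U}$ is the position-$n$ child, for a unique $n$, of some parent $u \in \mathbb{U}_{p}$. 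Together with Lemma \ref{lem5}, the parent map $v \mapsto f(v)$ on $\mathbb{U} \setminus \{1\}$ is thus well-defined and injective, so the forward chain from any $v^{*}$ is a simple acyclic sequence in $\mathbb{U}$.

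The main obstacle is the final step: ruling out that this chain escapes to infinity without ever reaching $1$. Lemma \ref{lem5} forbids non-trivial cycles, and the covering identity controls the local residue structure, but neither directly forces eventual descent to the root. To bridge the gap I would attempt a potential-function argument on $\log_{2}(v)$, using that $a(v)$ averages $2$ in a density sense on $\mathbb{U}$, so that a single step of $f$ multiplies $v$ by roughly $3/4$, and then combine this with the covering-density bounds to preclude a persistent upward drift. Making this rigorous is precisely the open content of the Collatz conjecture, and I expect any honest completion of the proof to require a substantive new ingredient beyond the uniqueness and covering facts already available in the text.
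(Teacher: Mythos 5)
Your proposal is incomplete, and you say so yourself: the final step---ruling out that the forward chain $v^{*}\mapsto f(v^{*})\mapsto f^{2}(v^{*})\mapsto\cdots$ escapes to infinity without reaching the root---is left open, and that gap is genuine. Your covering computation is correct as far as it goes: the position-$n$ children of parents $\equiv[1]_3$ form the class $z_n\pmod{2^{2n+1}}$, those of parents $\equiv[2]_3$ form $z_n+2^{2n-1}\pmod{2^{2n}}$, these classes are pairwise disjoint, and their relative densities in $\mathbb{U}$ sum to $1$. But this only re-establishes that every odd $v>1$ has a unique candidate parent $f(v)$, i.e., that $f$ is well defined and the parent map is single-valued; it says nothing about whether the ancestral chain of $v^{*}$ terminates at $\tau=1$ rather than being an infinite simple forward orbit (a divergent trajectory) or, outside $G$, a non-trivial cycle. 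Your minimal-counterexample framing does not help here either, since $f(v^{*})$ need not be smaller than $v^{*}$, so minimality gives no traction. Your closing diagnosis---that completing this step is precisely the substantive content of the conjecture and requires an ingredient not present in the text---is accurate.

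For comparison, the paper's own proof takes a different route: it computes the residue classes of the sibling sets modulo $24$ and $12$, partitions $V(G)$ into $V_{[1]}\cup V_{[2]}$, observes that $V_{[1]}\cup V_{[2]}\equiv\{1,3,5,7,9,11\}\pmod{12}$, and concludes $V(G)=\mathbb{U}$ because those six residues exactly cover $\mathbb{U}$. That inference is a non sequitur: by the paper's own definition, $A\equiv\{r_i\}\pmod{q_i}$ only asserts that every element of $A$ lies in exactly one of the listed classes, not that $A$ exhausts them; a proper subset of $\mathbb{U}$ (for instance $\mathbb{U}\setminus\{27\}$) still meets every residue class modulo $12$. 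The subsequent ``gap-filling'' discussion likewise assumes the disputed point when it posits that a hypothetical $u_k\notin V(G)$ has an ancestral chain that must begin at $u_0=1$. So the paper does not close the gap you identified; it relocates it. Your attempt, by contrast, correctly isolates where the real difficulty lies, and no argument assembled from Lemma \ref{lem5} and the covering identities alone can finish the proof.
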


\begin{proof}
	From \eqref{eq8}, every parent $u \equiv [1]_3$ yields $v_1 =  1 + 4\mu_1$. Since $\mu_1 \in \mathbb{E}$, we get
\[
	\mu_1 =
\begin{dcases}
	3\mathbb{E}, & \text{if } \mu_1 \equiv [0]_3,\\
	3\mathbb{U} + 1, & \text{if } \mu_1 \equiv [1]_3,\\
	3\mathbb{E} + 2, & \text{if } \mu_1 \equiv [2]_3.
\end{dcases}
\]
Similarly, if $u \equiv [2]_3$ such that $v_1 =  1 + 2\mu_2$, where $\mu_2 \in \mathbb{U}$, we obtain
\[
	\mu_2 =
\begin{dcases}
	3\mathbb{U}, & \text{if } \mu_2 \equiv [0]_3,\\
	3\mathbb{E} + 1, & \text{if } \mu_2 \equiv [1]_3,\\
	3\mathbb{U} + 2, & \text{if } \mu_2 \equiv [2]_3.
\end{dcases}
\]
By substituting for $\mathbb{E}$ and $\mathbb{U}$, noting that $u \equiv [0]_3$ is a leaf node, we derive
\begin{equation} \label{eq22}
\begin{aligned}
	u \equiv [1]_3 \colon \quad v_1 & =
\begin{dcases}
	1 + 24\mathbb{N}_0, & \text{if } \mu_1 \equiv [0]_3,\\
	17 + 24\mathbb{N}_0, & \text{if } \mu_1 \equiv [1]_3,\\
	9 + 24\mathbb{N}_0, & \text{if } \mu_1 \equiv [2]_3, \\
\end{dcases}
\\
	u \equiv [2]_3 \colon \quad v_1&  =
\begin{dcases}
	7 + 12\mathbb{N}_0, & \text{if } \mu_2 \equiv [0]_3,\\
	3 + 12\mathbb{N}_0, & \text{if } \mu_2 \equiv [1]_3,\\
	11 + 12\mathbb{N}_0, & \text{if } \mu_2 \equiv [2]_3.\\
\end{dcases}
\end{aligned}
\end{equation}
Observe that $v_1 \equiv [0]_3$, i.e. a leaf node, when $\mu_1 \equiv [2]_3$ in $u \equiv [1]_3$ and $\mu_2 \equiv [1]_3$ in $u \equiv [2]_3$; hence, $v_2 \equiv [1]_3$ is the first parent node in $H_{k}(u)$ in such cases. By applying \eqref{eq9} to \eqref{eq22}, we find that
\begin{equation} \label{eq23}
\begin{aligned}
	u \equiv [1]_3 \colon \quad H_{k}(u) & \equiv
\begin{dcases}
	\{1, (5, 21, 13)\}\pmod{24}, & \text{if } \mu_1 \equiv [0]_3,\\
	\{17, (21, 13, 5)\}\pmod{24}, & \text{if } \mu_1 \equiv [1]_3,\\
	\{9, (13, 5, 21)\}\pmod{24}, & \text{if } \mu_1 \equiv [2]_3, \\
\end{dcases}
\\
	u \equiv [2]_3 \colon \quad H_{k}(u) & \equiv
\begin{dcases}
	\{7, (5, 9, 1)\}\pmod{12}, & \text{if } \mu_2 \equiv [0]_3,\\
	\{3, (1, 5, 9)\}\pmod{12}, & \text{if } \mu_2 \equiv [1]_3,\\
	\{11, (9, 1, 5)\}\pmod{12}, & \text{if } \mu_2 \equiv [2]_3,\\
\end{dcases}
\end{aligned}
\end{equation}
where $(r_1,r_2,r_3)$ indicates a cycle of $r_i\pmod{24}$ in each sequence.

Partition $V(G)$ into subsets $V_{[1]}$ and $V_{[2]}$ defined by
\begin{equation} \label{eq24}
	V_{[1]} =  \bigcup_{k \in \mathbb{N}}  \bigcup_{u \equiv [1]_3} H_{k}(u), \quad V_{[2]} =  \bigcup_{k \in \mathbb{N}}  \bigcup_{u \equiv [2]_3} H_{k}(u),
\end{equation}
where $u \in \mathbb{U}_p$ at $k-1$ and the root $\tau$ is equal to $v_1 \in H_{1}(\tau)$. Clearly, we have that
\begin{equation} \label{eq25}
	V_{[1]} \cup V_{[2]} = V(G).
\end{equation}
Write $A \equiv \{r_i\}\pmod{q_i}$ to mean that every $a \in A$ is congruent to exactly one $r_i\pmod{q_i}$ for $i \in \{1,2,3,\ldots,n\}$. We obtain from \eqref{eq23} that
\begin{equation} \label{eq26}
	V_{[1]} \equiv \{1, 5, 9\}\pmod{12}, \quad V_{[2 ]}\equiv \{1, 3, 5, 7, 9, 11\}\pmod{12},
\end{equation}
which reveals that no element in $V_{[1]}$ is congruent to $\{3,7,11\}\pmod{12}$. This is easy to confirm since from \eqref{eq1} we have that
\[
\begin{aligned}
	f(3+12\mathbb{N}_0) &= 5+18\mathbb{N}_0=2+3(1+6\mathbb{N}_0),\\
	f(7+12\mathbb{N}_0) &= 11+18\mathbb{N}_0=2+3(3+6\mathbb{N}_0),\\
	f(11+12\mathbb{N}_0) &= 17+18\mathbb{N}_0=2+3(5+6\mathbb{N}_0),
\end{aligned}
\]
which are all congruent to $[2]_3$, but $V_{[1]}$ only consists of vertices arising from $f(x)=u \equiv [1]_3$.

Let $\mathcal{V}_1 = \{v_1 \in V_{[1]}\}$ and $\mathcal{V}_2 = \{v_1 \in V_{[2]}\}$, which means that $\mathcal{V}_r$ contains all initial vertices in $V_{[r]}$ for $r=1, 2$, such that $\mathcal{V}_1 \subset V_{[1]}$ and $\mathcal{V}_2 \subset V_{[2]}$. It is trivial that
\begin{equation} \label{eq27}
	\mathcal{V}_1 \cap \mathcal{V}_2 = \emptyset,
\end{equation}
since $g(u) \neq g(v)$ for any $u \equiv [1]_3$ and $v \equiv [2]_3$. To show this, let $\mu_1$ and $\mu_2$ be the multiples of $u$ and $v$, respectively. Then one obtains
\begin{equation} \label{eq28}
\begin{aligned}
	1+4\mu_1 &= 1+2\mu_2,\\
	2\mu_1 & = \mu_2
\end{aligned}
\end{equation}
which is false since $\mu_1 \in \mathbb{E}$ and $\mu_2 \in \mathbb{U}$. In fact, one easily finds from \eqref{eq8} that $\mathcal{V}_1 = \{1+8\mathbb{N}_0\}$ and $\mathcal{V}_2 = \{3+4\mathbb{N}_0\}$, which are mutually exclusive. Hence, \eqref{eq27} also implies Lemma \ref{lem5} since every sibling set is an infinite strictly ascending sequence that is fully determined by its initial element. It follows that $V_{[1]}$ and $V_{[2]}$ are mutually disjoint, i.e.
\begin{equation} \label{eq29}
	V_{[1]} \cap V_{[2]} = \emptyset.
\end{equation}
Since $\{1, 3, 5, 7, 9, 11\}\pmod{12}$ exactly covers $\mathbb{U}$ and $V_{[1]} \cup V_{[2]} \equiv \{1, 3, 5, 7, 9, 11\}\pmod{12}$, with each vertex in $G$ being unique, it follows that $V(G) = \mathbb{U}$ and the proof is complete.
\end{proof}

It is obvious that $G$ is \textit{weakly connected} since if it is treated as an undirected graph, then every pair of distinct vertices $u$ and $v$ can be joined by an undirected path. For any $u \in P$, where $P$ is the subset containing all parent nodes in $G$ such that $P \subset V(G)$, notice that $v_1>u$ if $u \equiv [1]_3$ and $v_1<u$ if $u \equiv [2]_3$. We say that $e=(u, v_1)$ is \textit{ascending} if $u \equiv [1]_3$ or \textit{descending} if $u \equiv [2]_3$. Any path $p(u_0,u_k) = \{u_0, u_1, u_2, \ldots, v_k\}$ is strictly ascending or descending if $u_i \in V_{[1]}$ or $u_i \in \mathcal{V}_2$, respectively, for all $i=0,1,2,\ldots, k-1$ in $G$. The \textit{end vertex} $u_k$ in $p(u_0, u_k)$ can be any element in the sibling set arising from $u_{k-1}$, such that $u_i \in \mathbb{U}_p$ for all $i=1,2,3,\ldots, k-1$ and $u_k \in \mathbb{U}$. If $u_k$ is a leaf node, i.e. $u_k \equiv [0]_3$, then $p(u_0,u_k)$ cannot extend to any other vertices beyond $k$, in which case we say that $u_k$ is a \textit{dead end} and $p(u_0, u_k)$ is \textit{truncated} at $u_k$. This leads to the fact that every path from the root in $G$ eventually terminates in a dead end. Hence, there can only be at most one vertex $\equiv [0]_3$ in any $p(u_0, u_k)$ and if it exists, it must be $u_k$.

To reinforce the proof of Lemma \ref{lem6}, we further demonstrate how $V(G)$ exactly covers $\mathbb{U}$. Consider a number line representing only the odd positive integers. In every $H_{k}(u)=\{v_n\}_{n \in \mathbb{N}}$ for $u \equiv \mathbb{U}_p$ and $k \in \mathbb{N}$, both \eqref{eq8} and \eqref{eq19} specify that $v_1=u+\mu_1=1+4\mu_1$ if $u \equiv [1]_3$ and $v_1=(u+\mu_2)/2=1+2\mu_2$ if $u \equiv [2]_3$, such that $u < v_1$ if $u \equiv [1]_3$ and $u > v_1$ if $u \equiv [2]_3$ as claimed above. The root $\tau$ gives rise to $H_{1}(\tau) = Z = \{1, 5, 21, 85, \ldots\}$, where $e=(\tau, v_1)$ forms the trivial cycle that is ignored in $G$. Mark $H_{1}(\tau)$ as a sequence on the odd number line at $k=1$. Also apply $g$ on every $v_n \in \mathbb{U}_p > 1$ in $H_{1}(u_0)$ to generate infinitely many sibling sets, whose vertices are also marked on the number line at $k=2$. Repeat the same process for increasing $k$ to infinity, as depicted in Figure \ref{Fig2} with only the first few vertices shown for the initial sibling sets at $k=1$ to $7$.
	
In this context, a \textit{gap} refers to the number of odd positive integers between two successive vertices in a sibling set. In $H_{1}(\tau)$, there exist exactly $2^{2n-1}-1$ odd integers between $v_n$ and $v_{n+1}$, not inclusive, for $n \in \mathbb{N}$. Hence, Lemma \ref{lem7} is true if the subsequent $g$ iterations uniquely generate all odd integers between vertices in $H_{1}(\tau)$. Ignoring the trivial cycle, the first sibling set $H_{2}(5)=\{3, 13, 53, \ldots\}$ at $k=2$ arises from $v_2=5$ in $H_{1}(\tau)$. Notice that each vertex in $H_{2}(5)$ occupies the midpoint between successive vertices in $H_{1}(\tau)$ (Figure \ref{Fig2}). Since the initial vertex completely determines the spacing between successive vertices in a sibling set, it is crucial to examine how the subsequent initial vertices are generated to fill the gaps in $H_{1}(\tau)$. Observe that $v_1$ at $k>1$ falls to the right or left of its parent $u$, depending on whether $u \equiv [1]_3$ or $[2]_3$, respectively, since
\begin{equation} \label{eq30}
	v_1 = u + \mu_1 \quad \text{if } u \equiv [1]_3, \qquad v_1 = u - (\mu_2 + 1) \quad \text{if } u \equiv [2]_3.
\end{equation}
With $\mu_1 \in \mathbb{E}$ and $\mu_2 \in \mathbb{U}$, it is assured that $v_1$ is always odd in \eqref{eq30}. Equating the expressions in \eqref{eq30} leads to \eqref{eq28}, thus implying that no two initial vertices in distinct sibling sets are equal at any $k>1$ in $G$.

\begin{figure} [htb]
\center{\includegraphics[scale = 0.6, width=\textwidth]{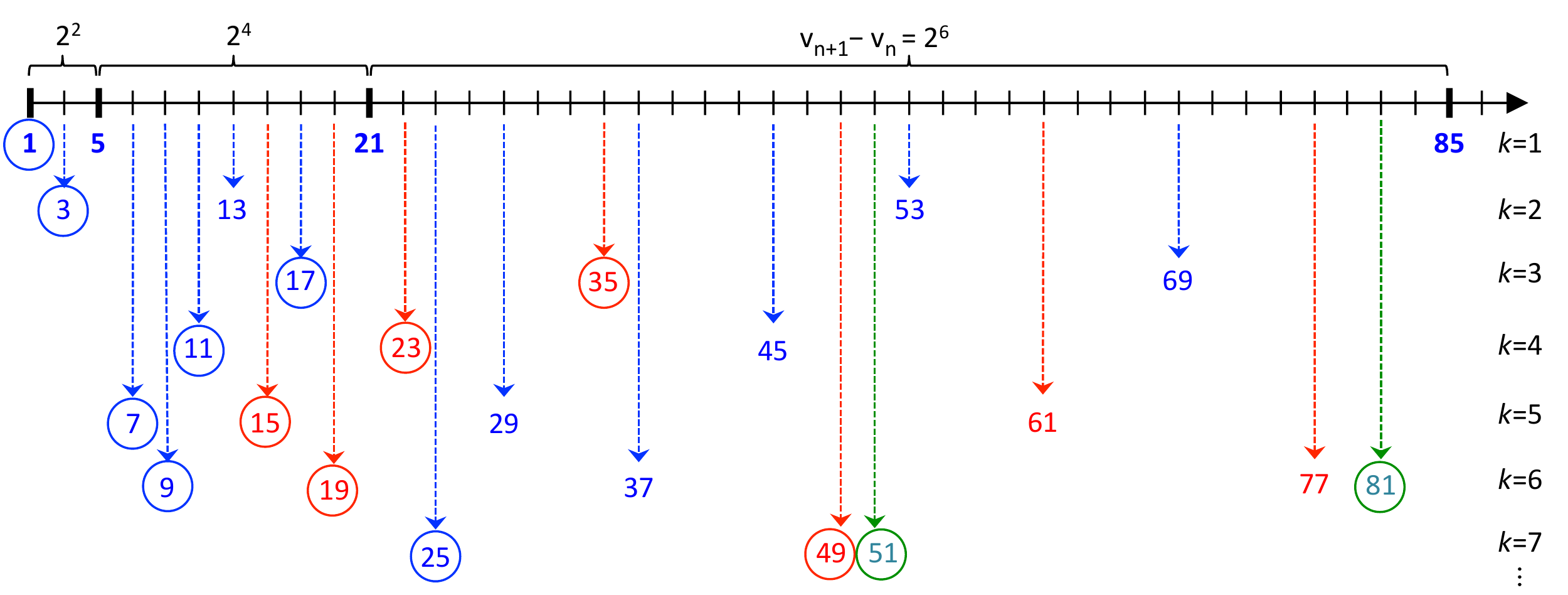}}
\caption{\label{Fig2} Gap-filling process by $g$ iteration. Numbers of the same color indicate the first few vertices (siblings) in the same sibling set, with the encircled number as the initial vertex, at each depth $k$ in $G$.}
\end{figure}

By applying \eqref{eq30} on any $u \not \equiv [0]_3$ at $k$ with respect to the odd number line to yield $v_1 \in \mathbb{U}$ at $k+1$, one can generate the sequence of vertices in $H_{k+1}(u)$ by any method given above. Since the gap between $v_n$ and $v_{n+1}$ strictly increases with $n$ in every sibling set at $k \in \mathbb{N}$, repeatedly iterating \eqref{eq30} eventually produces all odd integers to fill such gaps. The gaps tend to be fully filled in progression from left to right in $H_{1}(\tau)$, thereby completely covering the odd number line as $k$ goes to infinity. Since $v_1$ is a leaf node in $H_{2}(5)$, further iterations on $v_2=13$ produce the vertices $\{13, 17, 11, 7, 9\}$ at $k=2$ to $6$, which all lie between $5$ and $21$ in $H_{1}(\tau)$ (Figure \ref{Fig2}). For the other odd integers in the same gap, $15$ results from successive iterations on $v_3=53$ to yield $\{53, 35, 23, 15\}$ at $k=2$ to $5$, while $19$ is the initial vertex arising from $v_2=29$ at $k=5$. As $9$ and $15$ are leaf nodes, both paths $p(\tau, 9)$ and $p(\tau, 15)$ thus terminate in dead ends. The siblings sets associated with these initial vertices, e.g. as determined by \eqref{eq9}, contain infinitely many $u \in \mathbb{U}_p$ on which \eqref{eq30} also applies recursively. In fact, further recursions on $u \in \mathbb{U}_p$ up to infinity will generate all distinct odd numbers to fill the gaps in $H_{1}(\tau)$, thus eventually covering the entire odd number line. In short, this \textit{gap-filling} process by $g$ iteration (Figure \ref{Fig2}) generates $\Pi_k$, with its elements ordered in monotone increasing sequence on the odd number line, for all $k \in \mathbb{N}$. Therefore, since every vertex in $V(G)$ is unique by Lemma \ref{lem5} and $V(G)=\mathbb{U}$ by Lemma \ref{lem6}, we deduce from the gap-filling process that $V(G)$ is a complete and exact covering of $\mathbb{U}$.

The gap-filling process assures that no odd number is not covered by $V(G)$, as claimed by Lemma \ref{lem6}. Assume the contrary that there exists $u_k \in \mathbb{U}$, where $u_k \not \in V(G)$ such that $V(G) \neq \mathbb{U}$. It suffices to consider only $u_k \equiv [0]_3$, since, just the same, $u_k$ yields $H_{k+1}(u_k)$, with infinitely many leaf nodes to which every directed path eventually terminates, if $u_k \not \equiv [0]_3$. This implies that it is impossible to have any finite subset of $\mathbb{U}$ that is not in $V(G)$. It is always true that $u_k = z_n + 2^{2n-1}c$ or $u_k = z_n + 2^{2n}c$ for $c \in \mathbb{N}_0$, thus implying that there exists $u_{k-1} \in \mathbb{U}_p$ such that $g_{n}(u_{k-1})=u_k$ for $n \in \mathbb{N}$. It follows that $u_{k-1}$ is the parent of $u_k$, where $c=\mu_r$ given that $u_{k-1}=3\mu_r + r$ for $u_{k-1} \equiv [r]_3$. Suppose $u_{k-1}$ is also not in $G$, then there exists $u_{k-2} \not \equiv [0]_3$ that is the parent of $u_{k-1}$. In general, consider $\{u_i \in \mathbb{U}_p \colon g(u_i) = u_{i+1}, \forall i=k-2, k-3, \ldots, 2, 1, 0\}$ for some $k \in \mathbb{N}$, where $v_i \not \in V(G)$ for all $i$. Then we state the fact that $p(u_0, u_{k-1})$ is a path where $\deg^{-}(u_0) = 0$ and $\deg^{-}(u_i) = 1$ for $i=1,2,3, \ldots, k-1$, whereas $\deg^{+}(u_i) = \aleph_0$ for all $i=0, 1,2, \ldots, k-1$. It is impossible that $u_0>1$ because if such is indeed the case, then there must exist another vertex that is parent to $u_0 \not \in V(G)$, which is a contradiction. Hence, the only likelihood is that $u_0 =1$, which corresponds to the root $\tau$ of $G$, such that $p(u_0, u_{k-1})$ is connected to $G$. Since every parent in $G$ gives rise to one and only one sibling set $H_{k}(u_{k-1})$, with infinitely many elements, then $u_k \in H_{k}(u_{k-1})$. Therefore, $u_k$ is in $G$ and there exists a directed path $p(\tau, u_k)$ from the root $\tau$ to $u_k$, thus contradicting our assumption that $u_k$ is disjoint from $G$.

Call $u$ a \textit{non-initial} vertex if it is not the initial vertex in a sibling set. Any path consisting entirely of non-initial vertices in $G$ is always strictly ascending. If a path is a mix of ascending and descending edges, then it is called a \textit{hailstone path}. We refer to any proper subset of a path $p$ as \textit{sub-path} or \textit{segment} $s$, such that $s \subseteq p$. Obviously, $p$ is a subset of itself. It is a trivial fact that infinitely many vertices in a sibling set, say, $H_{k}(u)$ share the same sub-path $s(u_0, u)$ from the root $u_0$ to the parent node $u \not \equiv [0]_3$ at $k-1$ in $G$. This implies that there exist infinitely many odd natural numbers with convergent Collatz trajectories of equal length $k \in \mathbb{N}$ under  $f$ iteration given in \eqref{eq1}. In fact, we now claim that
\begin{lemma} [Convergence] \label{lem7}
	Every odd natural number has a convergent trajectory under $f$ iteration.
\end{lemma}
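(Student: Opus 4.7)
The plan is to read off convergence of the forward $f$-trajectory directly from the arborescence structure that Lemmas~\ref{lem5} and~\ref{lem6} have already secured. Fix any $u \in \mathbb{U}$. By Lemma~\ref{lem6}, $u \in V(G) = \bigcup_{k \in \mathbb{N}_0} \Pi_k$, so there is some $k = k(u) \in \mathbb{N}_0$ with $u \in \Pi_k$. Lemma~\ref{lem5} guarantees that this $k$ is unique, since a vertex occurring at two distinct depths would violate uniqueness within $G$.

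Next I would use the arborescence property encoded in \eqref{eq5}: the root $\tau = 1$ has in-degree $0$ and every other vertex has in-degree $1$. Combined with weak connectedness, this forces a unique directed path
\[
    p(\tau, u) = (u_0, u_1, \ldots, u_k), \qquad u_0 = 1, \quad u_k = u,
\]
in which every edge $(u_i, u_{i+1})$ arises from some iterate $u_{i+1} = g_{n_i}(u_i)$ of the transition rule \eqref{eq4}. Because $g$ was defined in \eqref{eq3} precisely as a branch-selecting inverse of $f$, one has $f(g_n(x)) = x$ for every $x \in \mathbb{U}_p$ and every admissible $n$. Applying this identity to each edge in reverse gives $f(u_{i+1}) = u_i$ for $i = 0, 1, \ldots, k-1$, so that $f^k(u) = u_0 = 1$, i.e.\ the forward trajectory $S^k(u)$ converges in exactly $k$ steps.

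The only real obstacle is verifying that the depth $k(u)$ is \emph{finite} for every odd $u$. Lemma~\ref{lem6} was stated as the set equality $V(G) = \mathbb{U}$, and on its face this does not a priori exclude an odd $u$ from belonging to $G$ only as a ``limit'' of sibling sets at unbounded depth rather than to some individual $\Pi_k$. I would close this gap by appealing explicitly to the inductive construction \eqref{eq12}: each vertex of $G$ is produced from earlier vertices by finitely many applications of $g_n$, so it is introduced at a specific, finite stage of the construction, and Lemma~\ref{lem5} forbids reintroduction. Hence $V(G) = \bigsqcup_{k \in \mathbb{N}_0} \Pi_k$ as a disjoint union, and every $u \in \mathbb{U}$ lies in a genuinely finite layer $\Pi_{k(u)}$. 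Once this finiteness is noted, the remainder of the argument is the mechanical reversal sketched above, and no further dynamical input is required.
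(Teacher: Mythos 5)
Your proposal is correct (modulo the paper's Lemmas~\ref{lem5} and~\ref{lem6}, which it takes as given) and follows essentially the same route as the paper: the arborescence structure gives a unique directed path $p(\tau,u)$, and reversing it via $f(g_n(x))=x$ yields $f^{k}(u)=1$. Your extra care in noting that membership in $V(G)=\bigcup_{k}\Pi_k$ forces a \emph{finite} depth $k(u)$ is a worthwhile explicit remark that the paper's one-line proof leaves implicit, but it does not change the argument.
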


\begin{proof}
	As $G$ is an arborescence, there is always a directed path from the root $\tau$ to any other vertex $u \in V(G)$. Since $V(G) = \mathbb{U}$ by Lemma \ref{lem6} and every path $p(\tau, u)$ is a convergent Collatz trajectory in reverse, then the claim of the lemma follows.
\end{proof}

A trivial corollary to Lemma \ref{lem7} is the fact that the Collatz trajectory of any odd natural number, as defined by \eqref{eq2}, can never diverge. Hence, with Lemmas \ref{lem5}-\ref{lem7}, we have shown why the Collatz conjecture is true.

\section*{Acknowledgments}
Critical remarks from an anonymous reviewer of an earlier version of the manuscript substantially improved the approach presented in this paper. The support from KAUST Core Labs is deeply appreciated.

\section*{References}

\end{document}